\newtheorem{thm}{Theorem}
\newtheorem{prop}[thm]{Proposition}
\newtheorem{cor}[thm]{Corollary}
\theoremstyle{definition}
\newtheorem{defn}[thm]{Definition}
\def\nec#1#2{[\,#1\,]#2}
\def\poss#1#2{\langle#1\rangle#2}
\def\Prop#1{[\![#1]\!]}
\def\pr#1#2#3{\ensuremath{#1 \mathrel{#3} #2}}
\def\pf#1#2{\ensuremath{#1 : #2}}
\def\R#1#2#3{#2 \mathrel{R_{#1}} #3}
\def\Rf#1#2{R_{#1}[#2]}
\def\VC{{\textbf{VC}}}
\def\Vc{{\textbf{Vc}}}
\def\Ck{{\textbf{Ck}}}
\def\CK{{\textbf{CK}}}
\let\phi\varphi
\def\lif{\mathbin{\supset}}
\DeclareSymbolFont{symbolsC}{U}{pxsyc}{m}{n}
\DeclareMathSymbol{\cif}{\mathrel}{symbolsC}{128}
\DeclareMathSymbol{\mif}{\mathrel}{symbolsC}{132}
\let\sat\vDash\let\nsat\nvDash
\author{Richard Zach\thanks{University of Calgary, Department of Philosophy, 2500 University Dr NW, Calgary AB T2N 1N4, Canada, \texttt{rzach\@ucalgary.ca}}} \title{Non-Analytic Tableaux for Chellas's Conditional Logic
  \CK{} and Lewis's Logic of Counterfactuals~\VC}
\date{}
\begin{document}
\bibliographystyle{plainnat}

\maketitle

\begin{abstract}
  Priest has provided a simple tableau calculus for Chellas's
  conditional logic~\Ck. We provide rules which, when added to
  Priest's system, result in tableau calculi for Chellas's~\CK{} and
  Lewis's~\VC. Completeness of these tableaux, however, relies on the
  cut rule.
\end{abstract}

\thispagestyle{firststyle}

\section{Introduction}

\citet{Chellas1975} presented a conditional logic \CK{} that is sound
and complete for a relational semantics. The system uses an
intensional conditional~$\Rightarrow$ which plays the same role as
the counterfactual conditional~$\cif$ of \cite{Lewis1973}.  Chellas
realized that the conditional $\phi \cif \psi$ can be seen as a
necessity operator in which the accessibility relation is indexed by
the proposition expressed by the antecedent. He suggested the
notation $\nec\phi\psi$. Lewis's dual might conditional
$\phi \mif \psi$ would then correspond to an indexed possibility
operator~$\poss\phi\psi$.  \cite{Segerberg1989} extended Chellas's
semantics and showed that Lewis's logic of counterfactuals \VC{} is an
axiomatic extension of \CK{} complete relative to the class of
``Segerberg models'' that satisfy certain restrictions.

In note~14 of his paper, \citet{Chellas1975} suggests an alternative
semantics in which the accessibility relation is not indexed by
propositions but by formulas themselves. While in \CK{} and its
extensions $\nec\phi\theta$ is equivalent to $\nec\psi\theta$ whenever
$\phi$ and $\psi$ are equivalent (always true at the same worlds),
this is not so when the accessibility relation is indexed by the
formulas~$\phi$ and $\psi$. The result is a logic~\Ck{} with a simpler
semantics. It is characterized by the rules and axioms of \CK{}
without the rule~RCEA. \citet{Priest2008} discussed Chellas's \Ck{} in
more depth and provided a sound and cut-free complete tableau system
for it.

We give additional branch extension rules for Priest's system which
result in sound and complete tableau systems for \CK{} and~\VC. These
systems are, however, non-analytic in that the cut rule is
included. The completeness proof proceeds by showing that tableau
provability is closed under the rules and axioms of \VC{} as given by
Chellas and Segerberg.

These are not the first tableau systems for
Lewis's~\VC. \citet{Gent1992}, based on work of \cite{Swart1983}, has
given a tableau system for~\VC. More recently, \citet{Negri2016} have
offered a cut-free complete sequent calculus for~\VC. These approaches
are all based on Lewis's semantics based on relative proximity of
worlds and incorporate the ordering relation between worlds into the
syntax. The present approach is perhaps more perspicuous and holds
promise for other logics based on the Chellas-Segerberg approach, such
as those studied by \citet{Unterhuber2013} and
\citet{Unterhuber2014}. A major drawback of our proposal is of course
the presence of the cut rule.  Semantic proofs of cut-free
completeness face significant challenges, which we discuss.

\section{Syntax and Semantics}

The syntax of Chellas's \CK{} is that of propositional logic with the
addition of an indexed necessity operator~$\nec\phi{}$:

\begin{defn}
  Formulas are defined inductively:
  \begin{enumerate}
  \item Every propositional variable $p$ is a formula.
  \item $\bot$ is a formula.
  \item If $\phi$ is a formula, so is $\lnot \phi$.
  \item If $\phi$ and $\psi$ are formulas, so are
    \[
    (\phi \lif \psi), (\phi \land \psi), (\phi \lor \psi),
    \nec\phi\psi.
    \]
  \end{enumerate}
\end{defn}

We can define $\top$ as $\lnot \bot$, $\poss\phi\psi$ as
$\lnot\nec\phi{\lnot\psi}$, and $\phi \equiv \psi$ as $(\phi \lif
\psi) \land (\psi \lif \phi)$. We can also define $\Box \phi$ as
$\nec{\lnot\phi}\bot$ and $\Diamond\phi$ as $\poss\phi\top$ (or,
$\lnot\nec\phi\bot$). If Lewis's notation is preferred, then read
$\phi\cif\psi$ as $\nec\phi\psi$ and $\phi\mif\psi$ as
$\poss\phi\psi$, i.e., $\lnot\nec\phi{\lnot\psi}$.  Segerberg
preferred the notations $\phi \mathop{\sqsupset} \psi$ and $\phi
\mathop{>} \psi$ to Chellas's $\nec\phi\psi$ and $\poss\phi\psi$.

Chellas also provided a relational semantics for which his logic \CK{}
is sound and complete. However, that semantics does not cover
Lewis's~\VC{}.  Segerberg provided the required generalization: a
semantics for which the axiomatic system for \VC{} is sound and
complete.

\begin{defn}
  A Segerberg model $M = \langle U, P, R, V\rangle$ consists of a set
  of worlds~$U \neq \emptyset$, a set of propositions~$P \subseteq
  \wp(U)$, a propositionally indexed accessibility relation $R\colon P \to
  \wp(U \times U)$, and a variable assignment $V\colon \mathit{Var} \to P$.

  We write $R_S$ for $R(S)$ and $\Rf S x$ for $\{ y \mid \R S x y\}$.
  
  The set of propositions must contain~$\emptyset$, be closed under
  intersection, complement, and necessitation, i.e.,
  \begin{enumerate}
  \item $\emptyset \in P$, $U \in P$.
  \item If $S \in P$, then $U \setminus S \in P$.
  \item If $S$, $T \in P$, then $S \cap T \in P$ and $S \cup T \in P$.
  \item If $S$, $T \in P$, then $\{x \in U \mid \Rf S x \subseteq T\} \in P$.
  \end{enumerate}
\end{defn}

\begin{defn}
  Truth of a formula $\phi$ at a world~$x$ in $M$, $M, x \sat \phi$, 
  is defined by:
  \begin{enumerate}
  \item $M, x \nsat \bot$ always.
  \item $M, x \sat p$ iff $x \in V(p)$.
  \item $M, x \sat \phi \land \psi$ iff $M, x \sat \phi$ and $M, x \sat \psi$.
  \item $M, x \sat \phi \lor \psi$ iff $M, x \sat \phi$ or $M, x \sat \psi$.
  \item $M, x \sat \phi \lif \psi$ iff $M, x \nsat \phi$ or $M, x \sat \psi$.
  \item $M, x \sat \nec\phi\psi$ iff for all~$y$ such that $\R \phi x
    y$, $M, y \sat \psi$.
  \item $M, x \sat \poss\phi\psi$ iff for some~$y$ such that $\R \phi x
    y$, $M, y \sat \psi$.
  \end{enumerate}
  We write $\Prop\phi$ for $\{x \mid M, x \sat \phi\}$ and $R_\phi$ for
  $R_{\Prop\phi}$.
  
  We say that $M$ satisfies $\phi$ if $M, x \sat \phi$ for some $x \in
  U$; $M$ satisfies $\Gamma$ if for some $x \in U$, $M, x \sat \phi$
  for all $\phi \in \Gamma$; $\Gamma$ is satisfiable if some $M$
  satisfies~$\Gamma$.
\end{defn}

\begin{defn}
  We say that $\Gamma$ \emph{\CK-entails} $\phi$, $\Gamma \sat_\CK
  \phi$, iff for every Segerberg model~$M$ and world~$x$ such that $M,
  x \sat \psi$ for every $\psi \in \Gamma$, $M, x \sat \phi$.
\end{defn}

\section{Tableaux for \Ck{} and \CK}

\begin{defn}
  \emph{Prefixed formulas} are expressions of the form $\pf{i}{\phi}$
  or $\pr{i}{j}{\phi}$.
\end{defn}

Intuitively, $\pf{i}{\phi}$ means ``$\phi$ is true at~$i$'' and $\pr i j
\phi$ means ``$j$ is $\phi$-accessible from~$i$.''

\begin{table}
  \begin{center}
  \begin{tabular}{cc}
    \hline\\
      \AxiomC{\pf i {\phi \land \psi}}
      \RightLabel{$\land$}
      \UnaryInfC{\pf i {\phi}}
      \noLine
      \UnaryInfC{\pf i {\psi}}
      \DisplayProof
    &
      \AxiomC{\pf i {\lnot(\phi \land \psi)}}
      \RightLabel{$\lnot\land$}
      \UnaryInfC{\pf i {\lnot\phi} \qquad \pf i {\lnot\psi}}
      \DisplayProof
\\[5ex]
      \AxiomC{\pf i {\phi \lor \psi}}
      \RightLabel{$\lor$}
      \UnaryInfC{\pf i {\phi} \qquad \pf i {\psi}}
      \DisplayProof
      &
      \AxiomC{\pf i {\lnot(\phi \lor \psi)}}
      \RightLabel{$\lnot\lor$}
      \UnaryInfC{\pf i {\lnot\phi}}
      \noLine
      \UnaryInfC{\pf i {\lnot\psi}}
      \DisplayProof
\\[5ex]
      \AxiomC{\pf i {\phi \lif \psi}}
      \RightLabel{$\lif$}
      \UnaryInfC{\pf i {\lnot\phi} \qquad \pf i {\psi}}
      \DisplayProof
      &
      \AxiomC{\pf i {\lnot(\phi \lif \psi)}}
      \RightLabel{$\lnot\lif$}
      \UnaryInfC{\pf i {\phi}}
      \noLine
      \UnaryInfC{\pf i {\lnot\psi}}
      \DisplayProof
      \\[5ex]
      \multicolumn{2}{c}{\AxiomC{\pf i {\lnot\lnot\phi}}
      \RightLabel{$\lnot$}
      \UnaryInfC{\pf i {\phi}}\DisplayProof}
      \\[3ex]
      \AxiomC{\pf i {\nec\phi\psi}}
      \noLine
      \UnaryInfC{\pr i j \phi}
      \RightLabel{$\Box$}
      \UnaryInfC{\pf j \psi}
      \DisplayProof
      &
      \AxiomC{\pf i {\lnot\nec\phi\psi}}
      \RightLabel{$\lnot\Box$}
      \UnaryInfC{\pf j {\lnot\psi}}
      \noLine
      \UnaryInfC{\pr i j \phi}
      \DisplayProof
      \\[5ex]
      \AxiomC{\pf i {\poss\phi\psi}}
      \RightLabel{$\Diamond$}
      \UnaryInfC{\pf j \psi}
      \noLine
      \UnaryInfC{\pr i j \phi}
      \DisplayProof
      &
      \AxiomC{\pf i {\lnot\poss\phi\psi}}
      \noLine
      \UnaryInfC{\pr i j \phi}
      \RightLabel{$\lnot\Diamond$}
      \UnaryInfC{\pf j {\lnot\psi}}
      \DisplayProof\\[5ex]
      \multicolumn{2}{c}{In the $\lnot\Box$ and $\Diamond$ rules, $j$
        must be new to the branch.}\\ \hline
  \end{tabular}\end{center}
  
  \caption{Branch extension rules for Priest's tableau system for
    Chellas's \Ck}\label{rules-CK}
\end{table}

\begin{defn}
  Let $\Gamma$ be a set of prefixed formulas.  A \emph{tableau} is a
  downward-branching tree labelled by prefixed formulas such that
  every prefixed formula in the tree is either in~$\Gamma$ or is a
  conclusion of a branch extension rule. If a formula is one
  conclusion of a branching rule, its siblings in the tree must be
  labelled with the other conclusions.

  A branch of a tableau is \emph{closed} if it
  contains both $\pf i \phi$ and $\pf i {\lnot\phi}$ for some $i$
  and~$\phi$, or it contains $\pf i \bot$.

  We write $\Gamma \vdash \phi$ if $\{\pf 1 \psi \mid \psi \in \Gamma\}
  \cup \{\pf 1 \lnot\phi\}$ has a closed tableau.
\end{defn}

We say a tableau is a \Ck-tableau if it only uses the rules of
\hyperref[rules-CK]{Table~\ref*{rules-CK}}, and a \Vc-tableau
if it also used the rules of
\hyperref[rules-VC]{Table~\ref*{rules-VC}}. A \CK- or
\VC-tableau is a \Ck- or \Vc-tableau that in
addition uses the rules
\[
\AxiomC{}
\RightLabel{cut}
\UnaryInfC{$\pf i \phi \qquad \pf i {\lnot \phi}$}
\DisplayProof\qquad
\AxiomC{\pr i j \phi}
\RightLabel{ea}
\UnaryInfC{$\begin{array}{l} \pf k {\lnot \phi}\\ \pf k \psi\end{array}
    \qquad
    \begin{array}{l} \pf k \phi\\ \pf k {\lnot \psi}\end{array}
    \qquad \pr i j \psi$}
\DisplayProof
\]
In the cut rule, the index~$i$ must already occur on the branch. 
In the ea rule, the index $k$ must be new to the branch.

Both cut and ea are non-analytic in the sense that the conclusion
formulas are not subformulas of the premises. This is an essential
property of cut. The rule ea could, however, be replaced by the following
analytic rule:
\[
\AxiomC{\pf i {\nec\psi\theta}}
\noLine
\UnaryInfC{\pr i j \phi}
\RightLabel{ea$'$}
\UnaryInfC{$\begin{array}{l} \pf k {\lnot \phi}\\ \pf k \psi\end{array}
    \qquad
    \begin{array}{l} \pf k \phi\\ \pf k {\lnot \psi}\end{array}
    \qquad \pf j \theta$}
\DisplayProof
\]
This rule could also replace the $\Box$ rule in tableau systems for
\CK{} and~\VC.

We write $\Gamma \vdash_\Ck \phi$, etc., to indicate that there is a
closed \Ck-tableau that shows $\Gamma \vdash \phi$.

\begin{prop}\label{axioms-Ck}
  There are $\Ck$-proofs of the following:
  \begin{align}
    \nec\phi{(\psi \land \theta)} &
    \vdash \nec\phi\psi \land \nec\phi\theta \tag{CM} \\
    \nec\phi\psi \land \nec\phi\theta &
    \vdash \nec\phi{(\psi \land \theta)} \tag{CC} \\
    & \vdash \nec\phi\top \tag{CN}
  \end{align}
\end{prop}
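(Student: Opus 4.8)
The plan is to treat the indexed operator $\nec\phi{}$ exactly as one treats a normal box in modal logic, exploiting that the $\Box$ and $\lnot\Box$ rules transfer formulas across the single accessibility relation~$R_\phi$. In each case I would open the tableau with the premises and the negated conclusion prefixed to world~$1$, use $\lnot\Box$ to conjure a witness world~$j$ with $\pr 1 j \phi$ at which the ``offending'' formula is forced, and then feed that same $\pr 1 j \phi$ to the $\Box$ rule to pull the content of each premise-box down to~$j$. The decisive observation is that all of the boxes in (CM) and (CC) share the \emph{syntactically identical} antecedent~$\phi$, so a single accessibility assertion $\pr 1 j \phi$ licenses every $\Box$ application; no appeal to cut or ea is needed, and these come out as genuine \Ck-proofs.

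For (CM) I would first apply $\lnot\land$ to $\pf 1 {\lnot(\nec\phi\psi \land \nec\phi\theta)}$, splitting into a branch carrying $\pf 1 {\lnot\nec\phi\psi}$ and one carrying $\pf 1 {\lnot\nec\phi\theta}$. On the first branch, $\lnot\Box$ yields a new world with $\pf j {\lnot\psi}$ and $\pr 1 j \phi$; applying $\Box$ to the premise $\pf 1 {\nec\phi{(\psi\land\theta)}}$ together with $\pr 1 j \phi$ gives $\pf j {\psi \land \theta}$, and $\land$ then produces $\pf j \psi$, closing against $\pf j {\lnot\psi}$. The second branch closes symmetrically via~$\theta$.

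For (CC) I would decompose the premise with $\land$ into $\pf 1 {\nec\phi\psi}$ and $\pf 1 {\nec\phi\theta}$, then apply $\lnot\Box$ to $\pf 1 {\lnot\nec\phi{(\psi\land\theta)}}$ to obtain a new world with $\pf j {\lnot(\psi\land\theta)}$ and $\pr 1 j \phi$. Two applications of $\Box$, one per premise conjunct and each reusing the same $\pr 1 j \phi$, give $\pf j \psi$ and $\pf j \theta$. Finally $\lnot\land$ splits $\pf j {\lnot(\psi\land\theta)}$ into $\pf j {\lnot\psi}$ and $\pf j {\lnot\theta}$, and each resulting branch closes against one of the two derived formulas. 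For (CN), recalling that $\top$ abbreviates $\lnot\bot$, I would apply $\lnot\Box$ to $\pf 1 {\lnot\nec\phi\top}$ to get $\pf j {\lnot\lnot\bot}$ (and an irrelevant $\pr 1 j \phi$); a single use of the $\lnot$ rule yields $\pf j \bot$, closing the branch outright.

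There is no serious obstacle here: the content is precisely the K-normality of $\nec\phi{}$, and the tableau rules are designed to deliver it. The only points demanding care are bookkeeping ones, namely confirming that the witness~$j$ introduced by $\lnot\Box$ is new to the branch (guaranteed by the side condition), reusing the \emph{same} $\pr 1 j \phi$ for every $\Box$ step rather than spawning distinct accessible worlds, and, in (CN), unfolding $\top$ as $\lnot\bot$ so that closure comes by $\pf j \bot$ rather than by a complementary pair.
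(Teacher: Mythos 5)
Your proposal is correct and matches the paper's own proof: the Appendix gives exactly these tableaux, closing each of CM, CC, and CN with a single $\lnot\Box$ witness world and $\Box$ applications reusing the one accessibility assertion $\pr 1 2 \phi$, with no cut or ea. The only cosmetic differences are the order of the $\lnot\land$ versus $\Box$ steps in CC and your (slightly more careful) explicit unfolding of $\lnot\top$ to $\pf j \bot$ in CN, where the paper closes the branch directly on $\pf 2 {\lnot\top}$.
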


\begin{proof}
  See \hyperref[appendix]{Appendix}.
\end{proof}

CM and CC together are Segerberg's M1, and CN is Segerberg's M2.

\begin{prop}\label{RCEC}
  If $\psi \vdash_\Ck \theta$, then $\nec\phi\psi \vdash_\Ck \nec\phi\theta$
\end{prop}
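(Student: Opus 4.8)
The plan is to manufacture a closed \Ck-tableau for $\{\pf 1 {\nec\phi\psi}, \pf 1 {\lnot\nec\phi\theta}\}$ out of the closed \Ck-tableau that witnesses the hypothesis $\psi \vdash_\Ck \theta$. First I would unpack the two starting formulas. Applying the $\lnot\Box$ rule to $\pf 1 {\lnot\nec\phi\theta}$ introduces a fresh index $j$ together with $\pf j {\lnot\theta}$ and $\pr 1 j \phi$. Since $\pr 1 j \phi$ now sits on the branch alongside the premise $\pf 1 {\nec\phi\psi}$, the $\Box$ rule immediately yields $\pf j \psi$. Thus after these two steps the branch carries both $\pf j \psi$ and $\pf j {\lnot\theta}$ -- precisely the two root formulas of a proof of $\psi \vdash_\Ck \theta$, but anchored at the world $j$ rather than at $1$.

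The key move is then to graft a relabelled copy of the assumed closed tableau $\mathcal{T}$ for $\{\pf 1 \psi, \pf 1 {\lnot\theta}\}$ directly below $\pf j \psi$. Let $\sigma$ be the index renaming that sends the root index $1$ of $\mathcal{T}$ to $j$ and sends every other index occurring in $\mathcal{T}$ to a pairwise-distinct index new to the branch built so far; this is possible because $\mathcal{T}$ contains only finitely many indices. I would establish, as a lemma by induction on the height of $\mathcal{T}$, that applying $\sigma$ uniformly to every prefixed formula of $\mathcal{T}$ again produces a \Ck-tableau -- now for $\{\pf j \psi, \pf j {\lnot\theta}\}$ -- and that it remains closed. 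Appending this $\sigma(\mathcal{T})$ below $\pf j \psi$ then gives a closed tableau for $\{\pf 1 {\nec\phi\psi}, \pf 1 {\lnot\nec\phi\theta}\}$, which is exactly what $\nec\phi\psi \vdash_\Ck \nec\phi\theta$ requires.

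The crux, and the part I expect to demand the most care, is this renaming lemma, in particular the eigenvariable side conditions on the $\lnot\Box$ and $\Diamond$ rules that require the introduced index to be new to the branch. Two points must be checked. First, every rule of Table~\ref{rules-CK} is schematic in its indices, so a rule application in $\mathcal{T}$ with premises $P$ and conclusion $C$ maps under $\sigma$ to a genuine rule application with premises $\sigma(P)$ and conclusion $\sigma(C)$; closure is preserved as well, since a clash $\pf i \chi$, $\pf i {\lnot\chi}$ becomes $\pf {\sigma(i)} \chi$, $\pf {\sigma(i)} {\lnot\chi}$. Second, an index that was new to a branch of $\mathcal{T}$ at the point of its introduction must stay new once that branch is placed below our initial segment, whose indices are $1$ and $j$. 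This is exactly why $\sigma$ must be injective and must send the non-root indices of $\mathcal{T}$ to indices disjoint from $\{1, j\}$: an index $a$ absent above its introduction in $\mathcal{T}$ then has $\sigma(a) \neq 1, j$ and $\sigma(a)$ distinct from the images of all indices occurring above it, so the freshness constraint transfers intact. Once this bookkeeping is in place, the induction goes through routinely.
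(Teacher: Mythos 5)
Your proposal is correct and follows essentially the same route as the paper: apply $\lnot\Box$ and then $\Box$ to put $\pf j {\lnot\theta}$ and $\pf j \psi$ on the branch, then graft a renamed copy of the closed tableau witnessing $\psi \vdash_\Ck \theta$ below. The paper's renaming is simply ``raise every index by $1$'' (so $j=2$), and it asserts without argument the relabelling lemma whose eigenvariable bookkeeping you spell out explicitly; that added detail is the only difference.
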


\begin{proof}
  Since $\psi \vdash_\Ck \theta$, there is a closed \Ck-tableau with
  assumptions $\pf 1 \psi, \pf 1 {\lnot\theta}$. If we raise every
  index in it by $1$ it remains a closed tableau. Now consider:
\begin{center}
  \begin{tableau}{}
    [\pf 1 \nec\phi\psi, just = Ass
      [\pf 1 {\lnot\nec\phi\theta}, just = Ass
        [\pr 1 2 \phi, just = $\lnot\Box$:!u
          [\pf 2 \lnot\theta, just = $\lnot\Box$:!uu
            [\pf 2 \psi, just = $\Box$:!uuuu
              [\vdots,close]
            ]
          ]
        ]
      ]
    ]
  \end{tableau}
\end{center}
where the part indicated by $\vdots$ is the above closed tableau for
$\psi \vdash \theta$ with indices raised by~1 but without its
assumptions (which however appear on lines 4 and~5). Since that
tableau closes, the resulting tableau closes.
\end{proof}

This establishes the derivability of Chellas's rule RCEC, aka Segerberg's~EC:
\[
\AxiomC{$\psi \equiv \theta$}
\RightLabel{RCEC}
\UnaryInfC{$\nec\phi\psi \equiv \nec\phi\theta$}
\DisplayProof
\]

\begin{prop}\label{RCEA}
  If $\phi \vdash_\CK \psi$ and $\psi \vdash_\CK \phi$, then
  $\nec\phi\theta \vdash_\CK \nec\psi\theta$.
\end{prop}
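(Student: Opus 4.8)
The plan is to exhibit a closed \CK-tableau for the assumptions $\pf 1 {\nec\phi\theta}$ and $\pf 1 {\lnot\nec\psi\theta}$. Applying the $\lnot\Box$ rule to the second assumption introduces a fresh index, say~$2$, together with $\pf 2 {\lnot\theta}$ and $\pr 1 2 \psi$. The goal is then to convert the $\psi$-edge $\pr 1 2 \psi$ into the $\phi$-edge $\pr 1 2 \phi$: once $\pr 1 2 \phi$ is available on a branch, the $\Box$ rule applied to it together with the first assumption $\pf 1 {\nec\phi\theta}$ yields $\pf 2 \theta$, which clashes with $\pf 2 {\lnot\theta}$ and closes that branch.

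The tool for this conversion is the ea rule. Applying it to $\pr 1 2 \psi$, with its schematic premise formula instantiated as $\psi$ and its schematic second formula as $\phi$, and taking the required new index to be~$3$, splits the branch three ways: a branch carrying $\pf 3 {\lnot\psi}$ and $\pf 3 \phi$; a branch carrying $\pf 3 \psi$ and $\pf 3 {\lnot\phi}$; and the branch carrying the desired $\pr 1 2 \phi$, which closes as just described. It remains to close the two \emph{mismatch} branches, and this is exactly where the two hypotheses enter: the first, asserting $\phi$ and $\lnot\psi$ at~$3$, is closed using $\phi \vdash_\CK \psi$, while the second, asserting $\psi$ and $\lnot\phi$ at~$3$, is closed using $\psi \vdash_\CK \phi$. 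This is why RCEA requires inter-derivability in both directions.

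Concretely, $\phi \vdash_\CK \psi$ supplies a closed \CK-tableau with assumptions $\pf 1 \phi$ and $\pf 1 {\lnot\psi}$, a copy of which I would graft onto the first mismatch branch, and symmetrically for the tableau given by $\psi \vdash_\CK \phi$ on the second branch. As in the proof of Proposition~\ref{RCEC}, the assumptions of each grafted tableau already appear on the branch (here at index~$3$ rather than~$1$), so only the rule applications below them need to be copied. The one point requiring care — and the main obstacle — is the index bookkeeping: one must choose an injective relabelling that sends the grafted tableau's root index to~$3$ and sends all of its other indices to values larger than any index occurring so far, so that closed branches stay closed, every application of cut in the grafted material still acts on an index present on the branch, and the freshness side conditions on the $\lnot\Box$, $\Diamond$, and ea rules are preserved. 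With this relabelling in place all three branches close, yielding the required \CK-proof; this is the same maneuver already used for Proposition~\ref{RCEC} and presents no difficulty beyond careful index management.
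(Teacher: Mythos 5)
Your proposal is correct and follows essentially the same route as the paper: apply $\lnot\Box$ to $\pf 1 {\lnot\nec\psi\theta}$ to obtain $\pf 2 {\lnot\theta}$ and $\pr 1 2 \psi$, use ea with a fresh index~$3$ to convert $\pr 1 2 \psi$ into $\pr 1 2 \phi$, close that branch via $\Box$ against $\pf 1 {\nec\phi\theta}$, and graft the relabelled closed tableaux for $\phi \vdash \psi$ and $\psi \vdash \phi$ onto the two mismatch branches. The paper handles the index bookkeeping by simply raising all indices in the grafted tableaux by~$2$, which is an instance of the injective relabelling you describe.
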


\begin{proof}
  Consider the tableau
  \begin{center}
    \begin{tableau}{}
      [\pf 1 {\nec\phi\theta}, just=Ass
        [\pf 1 {\lnot\nec\psi\theta}, just=Ass
          [\pf 2 {\lnot\theta}, just=$\lnot\Box$:!u
            [\pr 1 2 \psi, just=$\lnot\Box$:!uu
              [\pf 3 \lnot\psi, just =ea:!u
                [\pf 3 \phi, just=ea:!uu
                  [\vdots, close]
                ]
              ]
              [\pf 3 \psi, just =ea:!u
                [\pf 3 \lnot\phi, just=ea:!uu
                  [\vdots, close]
                ]
              ]
              [\pr 1 2 \phi, just = ea:!u
                [\pf 2 \theta, just=$\Box$:{!uuuuu,!u},
                  close={:!uuu,!c}, move by=2]
              ]
            ]
          ]
        ]
      ]
    \end{tableau}
    \end{center}
  The parts indicated by $\vdots$ are the closed tableaux for $\phi
  \vdash \psi$ and $\psi \vdash \phi$, respectively, with all indices
  raised by~$2$ and the assumption removed.
\end{proof}

This establishes the derivability of Chellas's rule RCEA, aka Segerberg's~EA:
\[
\AxiomC{$\phi \equiv \psi$}
\RightLabel{RCEA}
\UnaryInfC{$\nec\phi\theta \equiv \nec\psi\theta$}
\DisplayProof
\]

\begin{prop}\label{MP}
  In the presence of cut, if $\Gamma \vdash \phi$ and $\phi \vdash
  \psi$, then $\Gamma \vdash \psi$.
\end{prop}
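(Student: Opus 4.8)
The plan is to reduce $\Gamma \vdash \psi$ to the two given closed tableaux by a single application of cut. First I would set up the target tableau whose initial assumptions are $\{\pf 1 \gamma \mid \gamma \in \Gamma\} \cup \{\pf 1 {\lnot\psi}\}$; showing that this closes is exactly what is required to conclude $\Gamma \vdash \psi$. Since the index $1$ already occurs on this branch, cut on the formula $\phi$ at index $1$ is applicable, producing two sibling branches, one extended by $\pf 1 \phi$ and the other by $\pf 1 {\lnot\phi}$.

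On the branch carrying $\pf 1 \phi$, both $\pf 1 \phi$ and $\pf 1 {\lnot\psi}$ are present, so I would graft onto it the given closed tableau witnessing $\phi \vdash \psi$, with its two assumptions $\pf 1 \phi$ and $\pf 1 {\lnot\psi}$ deleted (they already appear on the branch). On the branch carrying $\pf 1 {\lnot\phi}$, the formulas $\{\pf 1 \gamma \mid \gamma \in \Gamma\}$ together with $\pf 1 {\lnot\phi}$ are all present, so I would graft the given closed tableau witnessing $\Gamma \vdash \phi$, again with its assumptions deleted. Each grafted tableau closes by hypothesis, so every branch of the resulting tree is closed, which is precisely what is wanted.

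The step that needs care is index management in the two grafts. Because both witnessing tableaux have all of their assumptions at index $1$, and the cut is performed at index $1$, the assumptions of each grafted tableau line up with formulas already present on its host branch without any relabelling; so, unlike in the proof of Proposition~\ref{RCEC}, no raising of indices is required to match the graft point. What I must still verify is that the side conditions on the new-index rules ($\lnot\Box$, $\Diamond$, and ea) survive grafting: each such application in a witnessing tableau introduces an index that was new to that tableau's branch, and since each host branch contains only the index $1$ up to the graft point, and the two grafts occupy disjoint sibling branches, these indices remain new to the combined branch. The analogous check for the side condition on cut (that its index already occurs above) is immediate, since the formulas above each cut are retained by the graft. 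Hence grafting preserves the property of being a legitimate tableau, and the closure of the two witnesses transfers to closure of the constructed tableau, giving $\Gamma \vdash \psi$.
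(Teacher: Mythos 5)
Your proposal is correct and follows essentially the same route as the paper: cut on $\phi$ at index $1$ below the assumptions $\pf 1 {\theta_1}, \dots, \pf 1 {\theta_n}, \pf 1 {\lnot\psi}$, then graft the closed tableau for $\phi \vdash \psi$ onto the $\pf 1 \phi$ branch and the one for $\Gamma \vdash \phi$ onto the $\pf 1 {\lnot\phi}$ branch. Your extra check that the new-index side conditions survive grafting is a detail the paper leaves implicit, but it does not change the argument.
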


\begin{proof}
  If $\Gamma = \{\theta_1, \dots, \theta_n\}$, $\Gamma \vdash \phi$
  means there is a closed tableau for
  \[
  \pf 1 {\theta_1}, \dots, \pf 1 {\theta_n}, \pf 1 {\lnot\phi}.
  \]
  Using cut, we can construct a closed tableau for
  \[
  \pf 1 {\theta_1}, \dots, \pf 1 {\theta_n}, \pf 1 {\lnot\psi}
  \]
  as follows:
  \begin{center}
  \begin{tableau}{not line numbering}
    [\pf 1 {\theta_1}, just=Ass
      [\vdots
        [\pf 1 {\theta_n}, just=Ass
          [\pf 1 {\lnot\psi}, just = Ass
            [\pf 1 \phi, just = cut
              [\vdots, close]
            ]
            [\pf 1 {\lnot\phi}, just = cut
              [\vdots, close]
            ]
          ]
        ]
      ]
    ]
  \end{tableau}
  \end{center}
  The sub-tableau on the left is the closed tableau for $\phi \vdash
  \psi$, since that branch contains both $\pf 1 \phi$ and $\pf 1
      {\lnot \psi}$.  The one on the right is the closed tableau for
      $\Gamma \vdash \phi$, since that branch contains all of $\pf 1
      {\theta_1}$, \dots, $\pf 1 {\theta_n}$, $\pf 1 {\lnot\phi}$.
\end{proof}

\begin{thm}\label{complete-CK}
  \begin{enumerate}
  \item If there is a derivation using RCEC and modus ponens using
    tautologies and axioms CM, CC, CN of $\phi$ from $\Gamma$, then
    there is a \Ck-tableau with cut that shows $\Gamma \vdash_{\Ck +
      \mathrm{cut}} \phi$.
  \item If there is a derivation using RCEA, RCEC, and modus ponens
    using tautologies and axioms CM, CC, CN of $\phi$ from $\Gamma$, then
    there is a \CK-tableau that shows $\Gamma \vdash_\CK \phi$.
  \end{enumerate}
\end{thm}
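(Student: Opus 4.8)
The plan is to translate the given axiomatic derivation into tableaux by verifying that tableau-provability is closed under all the axioms and rules of the calculus; each closure fact is already supplied by one of the preceding propositions, so the work is organizational. Because the congruence rules RCEC and RCEA are sound only when applied to theorems (a biconditional that merely follows from $\Gamma$ need not license the corresponding congruence), I would split the argument into two nested inductions: first that every \emph{theorem} $\tau$ of the calculus satisfies $\vdash \tau$, and then that every formula $\phi$ derivable from $\Gamma$ satisfies $\Gamma \vdash \phi$. Two facts make both inductions uniform. Weakening is immediate: a closed tableau from $\{\pf 1 {\lnot\chi}\}$ remains closed after the unused assumptions $\pf 1 \gamma$ ($\gamma \in \Gamma$) are adjoined, so $\vdash \chi$ yields $\Gamma \vdash \chi$. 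And the sequent $\chi \vdash \chi'$ and the implication $\vdash \chi \lif \chi'$ are interderivable, since one application of $\lnot\lif$ turns the root $\pf 1 {\lnot(\chi \lif \chi')}$ into the assumptions $\pf 1 \chi$, $\pf 1 {\lnot\chi'}$ of the sequent.

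For the first induction (theorems), the base cases are the axioms: tautologies are tableau-provable by the standard propositional completeness of the rules of Table~\ref{rules-CK}, while the instances of CM, CC, and CN are handled by Proposition~\ref{axioms-Ck} together with the sequent/implication bridge. For the rule steps, MP is simulated by cutting on the minor premise exactly as in Proposition~\ref{MP}; and a congruence step, say RCEC producing $\nec\phi\alpha \equiv \nec\phi\beta$ from a theorem $\alpha \equiv \beta$, is handled by first decomposing the closed tableau for $\pf 1 {\lnot(\alpha \equiv \beta)}$ (via $\lnot\land$ and $\lnot\lif$) into closed tableaux witnessing $\alpha \vdash \beta$ and $\beta \vdash \alpha$, then applying Proposition~\ref{RCEC} in each direction to get $\nec\phi\alpha \vdash \nec\phi\beta$ and $\nec\phi\beta \vdash \nec\phi\alpha$, and finally reassembling these into $\vdash \nec\phi\alpha \equiv \nec\phi\beta$. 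In part (2) the rule RCEA is treated identically, invoking Proposition~\ref{RCEA} in place of Proposition~\ref{RCEC}; since that proposition's tableau uses the ea rule, this is exactly where part (2) requires a full \CK-tableau rather than only $\Ck + \mathrm{cut}$.

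For the second induction (derivation from $\Gamma$), a line that lies in $\Gamma$ gives $\Gamma \vdash \gamma$ at once, a line that is a theorem is imported by weakening from the first induction, and a line obtained by MP from $\psi_j$ and $\psi_j \lif \psi_i$ is obtained by cutting on $\psi_j$ at the root: the left branch carries $\pf 1 {\psi_j}$ and is closed using the closed tableau for $\Gamma \vdash \psi_j \lif \psi_i$ (read, via $\lnot\lif$, as a proof of $\Gamma, \psi_j \vdash \psi_i$), while the right branch carries $\pf 1 {\lnot\psi_j}$ and is closed by the closed tableau for $\Gamma \vdash \psi_j$. The final line is $\phi$, giving $\Gamma \vdash \phi$.

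I expect the only real subtlety to be the conceptual bookkeeping just described: keeping the theorem layer and the $\Gamma$-consequence layer separate so that the congruence rules are applied only where Propositions~\ref{RCEC} and~\ref{RCEA} actually apply (namely to the single-premise relations $\alpha \vdash \beta$, not to arbitrary $\Gamma$-consequences), and translating cleanly between the sequent forms established in Propositions~\ref{axioms-Ck}--\ref{RCEA} and the implication and biconditional forms in which the axiomatic calculus states its axioms and congruence rules. No tableau construction beyond those already given in the cited propositions is required.
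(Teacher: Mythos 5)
Your proposal is correct and follows essentially the same route as the paper's proof: reduce the claim to closure of tableau provability under the axioms and rules, citing Propositions~\ref{axioms-Ck}, \ref{RCEC}, \ref{RCEA}, and~\ref{MP}. The two-layer induction, the weakening step, and the explicit bridge between the sequent forms of the propositions and the implication/biconditional forms of the axiomatic calculus are details the paper leaves implicit, but they fill in rather than alter its argument.
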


\begin{proof}
  \begin{enumerate}
    \item Propositional tautologies have closed tableaux since
      the rules for the ordinary propositional connectives are
      complete. The axioms CM, CC, CN have closed tableaux by
      \hyperref[axioms-Ck]{Proposition~\ref*{axioms-Ck}}. Closure of
      $\vdash_\Ck$ under modus ponens is established by
      \hyperref[MP]{Proposition~\ref*{MP}} (this requires
      cut). Closure under RCEC is established by
      \hyperref[RCEC]{Proposition~\ref*{RCEC}}.
    \item Follows from (1) and
      \hyperref[RCEA]{Proposition~\ref*{RCEA}}.
  \end{enumerate}
\end{proof}

\begin{cor}
  If $\Gamma \sat_\CK \phi$ then $\Gamma \vdash_\CK \phi$.
\end{cor}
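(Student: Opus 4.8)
The plan is to obtain the result as the composition of two completeness facts: the semantic completeness of the Chellas--Segerberg axiom calculus with respect to Segerberg models, and the simulation of that calculus by \CK-tableaux already established in \hyperref[complete-CK]{Theorem~\ref*{complete-CK}}(2). In other words, I would not extract a counter-model directly from an open tableau; instead I would route through the axiom system, for which completeness relative to the class of Segerberg models is the known input.

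First I would recall the completeness theorem for the axiomatic presentation of \CK. The relevant calculus has as its rules and axioms exactly propositional tautologies, modus ponens, RCEC, RCEA, and the schemes CM, CC, CN (that is, Segerberg's M1 and M2 together with the rules EC and EA). By the results of \citet{Chellas1975} and \citet{Segerberg1989}, this calculus is sound and complete for the class of Segerberg models, so that $\Gamma \sat_\CK \phi$ implies the existence of a derivation of $\phi$ from $\Gamma$ in it. Second, given such a derivation, I would apply \hyperref[complete-CK]{Theorem~\ref*{complete-CK}}(2), which converts any derivation built from precisely these rules and axioms into a closed \CK-tableau. This yields $\Gamma \vdash_\CK \phi$, as required, so the corollary is immediate once both inputs are in place.

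The one point requiring care is that the axiomatic completeness must be stated for \emph{Segerberg} models, since this is the class over which $\sat_\CK$ is defined above, rather than for Chellas's original indexed-proposition models. This is precisely the force of Segerberg's generalization: the canonical model construction for \CK{} produces a structure whose world set consists of maximal consistent sets and whose proposition set~$P$ is the family of truth sets~$\Prop\phi$, and one must check that $P$ contains $\emptyset$ and~$U$ and is closed under complement, intersection, union, and the necessitation operation $S \mapsto \{x \mid \Rf S x \subseteq T\}$, so that it qualifies as a Segerberg model in the sense of the definition above. The closure under necessitation in particular reduces, via the truth lemma, to the identity $\{x \mid \Rf{\Prop\phi}x \subseteq \Prop\psi\} = \Prop{\nec\phi\psi}$, which is where RCEA and the derivability of the rules and axioms are used. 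Granting this standard verification, no further model-theoretic work is needed, and I would expect the body of the argument to amount to two lines invoking the two cited facts.
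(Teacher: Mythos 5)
Your proposal is correct and follows essentially the same route as the paper: invoke Segerberg's strong completeness of the axiom system RCEA, RCEC, CM, CC, CN for Segerberg models, then convert the resulting derivation into a closed tableau via Theorem~\ref{complete-CK}(2). The additional discussion of the canonical model construction is a sensible elaboration of the cited input but not something the paper itself spells out.
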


\begin{proof}
  \citet{Segerberg1989} showed strong completeness of the system RCEA,
  RCEC, CM, CC, CN of \CK{} for Segerberg models. The result follows
  by \hyperref[complete-CK]{Theorem~\ref*{complete-CK}(2)}.
\end{proof}

We defer proofs of soundness to
\hyperref[soundness-sub-VC]{Corollary~\ref*{soundness-sub-VC}}.

\section{The systems \Vc{} and \VC}

\citet{Segerberg1989} has given an axiomatization of Lewis's logic
\VC{} of counterfactuals in the framework set out by Chellas, and
proven completeness for Segerberg models that satisfy a number of
conditions. The axioms that have to be added to \CK, and the
corresponding conditions on the models, are given in
\hyperref[axioms-conditions]{Table~\ref*{axioms-conditions}}.  The
corresponding tableau system is the tableau system for~\CK{}
plus the additional rules given in
\hyperref[rules-VC]{Table~\ref*{rules-VC}}.

We call a Segerberg model which satisfies the conditions of
\hyperref[axioms-conditions]{Table~\ref*{axioms-conditions}} a
\emph{\VC-model}.

\begin{table}
  \begin{center}
  \begin{tabular}{l|l|l}
    & Axiom & Condition\\
    \hline\hline
    S1 & $\nec\phi\phi$ & $\Rf S x \subseteq S$\\
    S2 & $\poss\phi\psi \lif \poss\psi\top$ &
    $\Rf S x \cap T \neq \emptyset \Rightarrow \Rf T x \neq \emptyset$\\
    S3 & $\phi \lif \nec\top\phi$ &
    $\Rf U x \subseteq \{x\}$ \\
    S4 & $\phi \lif \poss\top\phi$ &
    $x \in \Rf U x$\\
    S5 & $\nec{\phi \land \psi}\theta \lif \nec\phi{(\psi \lif \theta)}$ &
    $\Rf S x \cap T \subseteq \Rf {S \cap T}{x}$\\
    S6 & $\poss\phi\psi \lif (\nec\phi{(\psi \lif \theta)} \lif {}$ &
    $\Rf S x \cap T \neq \emptyset \Rightarrow {}$\\
    & $\quad\nec{\phi \land \psi}\theta$ & $\quad  \Rf{S \cap T}{x} \subseteq \Rf S x \cap T$
    \\\hline
\end{tabular}\end{center}
    \caption{Segerberg's axioms for \VC{} and corresponding conditions}
    \label{axioms-conditions}
\end{table}

\begin{table}
  \begin{center}
  \begin{tabular}{cc}
    \hline\\
      \AxiomC{\pr i j \phi}
      \RightLabel{R1}
      \UnaryInfC{\pf j \phi}
      \DisplayProof
    &
      \AxiomC{\quad}
      \RightLabel{R4}
      \UnaryInfC{\pr i i \top}
      \DisplayProof
\\[4ex]
      \AxiomC{\pf j \psi}
      \noLine
      \UnaryInfC{\pr i j \phi}
      \RightLabel{R2}
      \UnaryInfC{\pr i k \psi}
      \DisplayProof
      &
      \AxiomC{\pf j \psi}
      \noLine
      \UnaryInfC{\pr i j \phi}
      \RightLabel{R5}
      \UnaryInfC{\pr i j {\phi \land \psi}}
      \DisplayProof
      \\[5ex]
      \AxiomC{\pf i \phi}
      \noLine
      \UnaryInfC{\pf j {\lnot\phi}}
      \noLine
      \UnaryInfC{\pr i j \top}
      \RightLabel{R3}
      \UnaryInfC{\pf j \phi}
      \DisplayProof
      &
      \AxiomC{\pf j \psi}
      \noLine
      \UnaryInfC{\pr i j \phi}
      \noLine
      \UnaryInfC{\pr i k {\phi \land \psi}}
      \RightLabel{R6}
      \UnaryInfC{\pf k \psi}
      \noLine
      \UnaryInfC{\pr i k \phi}
      \DisplayProof
      \\[7ex]
      \multicolumn{2}{p{.6\textwidth}}{In R2, $k$
        must be new to the branch. In R4, $i$ must occur on the branch.}
      \\ \hline
  \end{tabular}\end{center}
  
  \caption{Branch extension rules for \VC}\label{rules-VC}
\end{table}

We show that our tableau system for \VC{} is sound and complete for
\VC-models. For soundness, we have to extend the definition of
satisfaction to prefixed formulas.

\begin{defn}
  Suppose $M$ is a Segerberg model, $\Gamma$ is a set of prefixed
  formulas, and $f\colon I \to U$ where $I$ is the set of indices
  occurring in~$\Gamma$. We say $M, f$ \emph{satisfies}~$\Gamma$ iff
  \begin{enumerate}
  \item If $\pf{i}{\phi} \in \Gamma$, then $M, f(i) \sat \phi$.
  \item If $\pr{i}{j}{\phi} \in \Gamma$, then $\R{\phi}{f(i)}{f(j)}$.
  \end{enumerate}
\end{defn}

\begin{thm}\label{soundness-VC}
  Tableaux for~\VC{} are sound for \VC-models, i.e., any
  set of formulas with a closed tableau is not satisfiable.
\end{thm}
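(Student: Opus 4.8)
The plan is to prove the standard branch-preservation lemma and then run the usual induction on tableau construction. Call a branch $B$ \emph{satisfiable} if there are a \VC-model $M$ and a map $f$ from the indices occurring on $B$ into $U$ such that $M, f$ satisfies every prefixed formula on $B$. The lemma I would establish is: if $B$ is satisfiable and a branch-extension rule is applied to it, then at least one resulting branch is satisfiable, where for the rules that introduce a new index ($\lnot\Box$, $\Diamond$, R2, ea) the witnessing map extends $f$ to that index. Granting the lemma, an easy induction shows that every tableau grown from a satisfiable root has a satisfiable branch: the root branch is satisfiable by hypothesis, applying a rule to a branch other than the satisfiable one leaves the latter intact, and applying a rule to the satisfiable branch yields a satisfiable successor by the lemma. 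Since a closed branch contains $\pf i \bot$ or both $\pf i \phi$ and $\pf i {\lnot\phi}$, which no $M, f(i)$ can satisfy, a satisfiable branch is open. Hence a satisfiable set has no closed tableau, the contrapositive of the claim.

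The bulk of the work is the case analysis in the lemma. The propositional rules and the four modal rules read off directly from the corresponding truth clauses, using $\Prop{\phi\land\psi} = \Prop\phi \cap \Prop\psi$ and its analogues; for $\lnot\Box$ and $\Diamond$ one sends the new index to the witness world furnished by the existential clause. For the six \VC-rules I would invoke the matching condition from \hyperref[axioms-conditions]{Table~\ref*{axioms-conditions}}, noting first that $\Prop\phi \in P$ for every $\phi$ (by induction on $\phi$ using the closure properties of $P$), so the conditions—stated for members of $P$—apply with $S = \Prop\phi$ and $T = \Prop\psi$. Thus R1 uses S1 ($\Rf{\Prop\phi}{x} \subseteq \Prop\phi$), R2 uses S2, R3 uses S3 (which forces $f(i) = f(j)$ whenever $\R{\Prop\top}{f(i)}{f(j)}$, since $\Prop\top = U$, so the conclusion $\pf j \phi$ inherits truth from $\pf i \phi$), R4 uses S4, R5 uses S5 together with $\Prop{\phi\land\psi} = \Prop\phi \cap \Prop\psi$, and R6 uses S6; in each case one checks that the world or accessibility fact named in the conclusion holds. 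The cut rule is immediate from bivalence—either $M, f(i) \sat \phi$ or $M, f(i) \sat \lnot\phi$, so one cut branch is satisfied—which is exactly why $i$ must already occur, so that $f(i)$ is defined.

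The one genuinely interesting case, and the step I expect to be the crux, is ea, since it is precisely the rule encoding the propositional (rather than formula-wise) indexing of the accessibility relation. Given a satisfiable branch containing $\pr i j \phi$, so $\R{\Prop\phi}{f(i)}{f(j)}$, I would split on whether $\Prop\phi = \Prop\psi$. If the propositions differ, pick a world $w$ in their symmetric difference: when $w \in \Prop\phi \setminus \Prop\psi$, extending $f$ by $f(k) = w$ satisfies the branch carrying $\pf k \phi$ and $\pf k {\lnot\psi}$, and symmetrically when $w \in \Prop\psi \setminus \Prop\phi$ it satisfies the branch carrying $\pf k {\lnot\phi}$ and $\pf k \psi$ (legitimate since $k$ is new). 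If instead $\Prop\phi = \Prop\psi$, then $R_\phi = R_{\Prop\phi} = R_{\Prop\psi} = R_\psi$, so $\R{\Prop\psi}{f(i)}{f(j)}$ already holds and the branch adding $\pr i j \psi$ is satisfied by $f$ itself. The same dichotomy validates the analytic variant ea$'$: in the equal-proposition case $\R{\Prop\psi}{f(i)}{f(j)}$ together with $\pf i {\nec\psi\theta}$ yields $\pf j \theta$ via the clause for $\nec\psi{}$. Finally, I would observe that the cases for the \Ck-rules and for cut and ea use only the general Segerberg-model clauses, not S1–S6, so the very same argument establishes soundness of the \Ck- and \CK-subsystems over all Segerberg models, which is the deferred \hyperref[soundness-sub-VC]{Corollary~\ref*{soundness-sub-VC}}.
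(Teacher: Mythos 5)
Your proposal is correct and follows essentially the same route as the paper's proof: the same branch-preservation lemma, the same case analysis (including the bivalence argument for cut and the split on whether $\Prop\phi = \Prop\psi$, picking a witness in the symmetric difference, for ea), and the same use of conditions S1--S6 for rules R1--R6. The only additions are minor but welcome bookkeeping (noting $\Prop\phi \in P$ so that $R_{\Prop\phi}$ is defined, and the remark on ea$'$), which the paper leaves implicit.
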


\begin{proof}
  We show that if a satisfiable branch is extended by an application
  of a rule, at least one resulting branch is satisfiable. Thus, every
  tableau starting from a satisfiable set of assumptions~$\Delta$
  contains at least one satisfiable branch and thus cannot be closed.

  If $\Delta$ is satisfiable, then for some $M$ and $x \in U$, $M, x
  \sat \phi$ for all~$\phi \in \Delta$. Let $f\colon \{1\} \to U$ be
  given by $f(1) = x$. Then $M, f$ satisfies the assumptions
  $\pf{1}{\theta_1}, \dots, \pf{1}{\theta_n}$ of the tableau where
  $\Delta = \{\theta_1, \dots, \theta_n\}$.

  Now let $\Gamma$ be the set of prefixed formulas on a branch satisfied
  by $M, f$. The cases for the rules given in
  \hyperref[rules-CK]{Table~\ref*{rules-CK}} are routine; we carry out
  the cases for $\nec\phi\psi$ and $\lnot\nec\phi\psi$:

  We reduce $\pf i {\nec\phi\psi} \in \Gamma$. The resulting branch is
  $\Gamma \cup \{\pf j \psi\}$ for some $j$ such that $\pr{i}{j}{\phi}
  \in \Gamma$. Since $M, f$ satisfies $\Gamma$, $M, f(i) \sat
  \nec\phi\psi$ and $\R{\phi}{f(i)}{f(j)}$. Hence, $M, f(j) \sat \psi$,
  i.e., $M, f$ satisfies $\pf{j}{\psi}$.
      
  We reduce $\pf i {\lnot\nec\phi\psi} \in \Gamma$. The resulting branch is
  $\Gamma \cup \{\pr{i}{j}{\phi}\} \cup \{\pf j {\lnot\psi}\}$ for some $j$ not
  occurring in~$\Gamma$. Since $M, f$ satisfies $\Gamma$, $M, f(i)
  \nsat \nec\phi\psi$, there is some $y \in U$ with $\R{\phi}{f(i)}{y}$
  such that $M, y \nsat \psi$. Extend $f$ to $f'$ by $f(j) = y$.  Then
  $M, f'$ also satisfies $\Gamma$. $M, f'(j) \nsat \psi$, i.e., $M, f$
  satisfies $\pf j {\lnot\psi}$. $\R{\phi}{f'(i)}{f'(j)}$ by definition
  of~$f'$, so $M, f'$ satisfies $\pr{i}{j}{\phi}$.

  The cut rule is sound: Suppose a branch~$\Gamma$ is extended by
  applying cut. We obtain two new branches, $\Gamma \cup \{\pf i
  \phi\}$ and $\Gamma \cup \{\pf i {\lnot\phi}\}$, where $i$ already
  occurs in~$\Gamma$. Since $M, f$ satisfies~$\Gamma$, $f(i)$ is
  defined. Either $M, f(i) \sat \phi$ or $M, f(i) \nsat \phi$. In the
  first case, $M, f$ satisfies $\Gamma \cup \{\pf i \phi\}$; in the
  second, it satisfies $\Gamma \cup \{\pf i {\lnot\phi}\}$.

  Rule ea is sound. Suppose $\Gamma$ contains $\pr i j \phi$. Since
  $M, f$ satisfies $\Gamma$, $\R\phi{f(i)}{f(j)}$. If $\Prop\phi =
  \Prop\psi$, then $R_\psi = R_\phi$ and we have $\R\psi{f(i)}{f(j)}$,
  i.e., $M, f$ satisfies~$\pr i j \psi$. Otherwise, there is some $y
  \in U$ where $M, y \nsat \phi$ but $M, y \sat \psi$ or $M, y \sat
  \phi$ but $M, y \nsat \psi$. Extend $f$ to $f'$ with $f'(k) = y$
  (since $k$ does not occur in $\Gamma$, this is possible). Then
  either $M, f'$ satisfies $\Gamma \cup \{\pf k {\lnot\phi}, \pf k
  \psi\}$ or it satisfies $\Gamma \cup \{\pf k {\phi}, \pf k
        {\lnot\psi}\}$.
  
  Now consider the rules of \hyperref[rules-VC]{Table~\ref*{rules-VC}}
  and assume that $M$ satisfies the respective condition of
  \hyperref[axioms-conditions]{Table~\ref*{axioms-conditions}}.
  \begin{enumerate}
    \item[R1.] $\Gamma$ contains $\pr{i}{j}{\phi}$ and the resulting branch
      contains $\pf{j}{\phi}$. By condition (1), $\Rf{\phi}{f(i)} \subseteq
      \Prop\phi$.  Since $M, f$ satisfies $\pr{i}{j}{\phi}$ we have
      $\R{\phi}{f(i)}{f(j)}$, i.e., $f(j) \in \Rf{\phi}{f(i)}$. Thus, $M,
      f(j) \sat \phi$.
    \item[R2.] $\Gamma$ contains $\pr{i}{j}{\phi}$ and $\pf j \psi$ and the
      extended branch contains $\pr{i}{k}{\psi}$, where $k$ does not occur
      in~$\Gamma$. Since $M, f$ satisfies $\pr{i}{j}{\phi}$ we have
      $\R{\phi}{f(i)}{f(j)}$, i.e., $f(j) \in \Rf{\phi}{f(i)}$. Since $M,
      f$ satisfies $\pf j \psi$ we have $f(j) \in \Prop\psi$, so $f(j)
      \in \Rf{\phi}{f(i)} \cap \Prop\psi$. By condition~(2),
      $\Rf{\psi}{f(i)} \neq \emptyset$, i.e., there is some $y \in U$
      such that $\R{\psi}{f(i)}{y}$. Extend $f$ to $f'$ with $f'(k) =
      y$. $M, f'$ satisfies $\pr{i}{k}{\psi}$.
    \item[R3.] $\Gamma$ contains $\pr i j \top$; $\pf{i}{\phi}$; and $\pf j
      {\lnot\phi}$. The resulting branch also contains $\pf j
      \phi$ and is thus closed. So in this case we have to show
      that $\Gamma$ is not satisfiable. If $M, f$ satisfies $\pr i j
      \top$ we have $\R{\top}{f(i)}{f(j)}$, i.e., $f(j) \in
      \Rf{\top}{f(i)}$. By condition (3), $f(i) = f(j)$. But this is
      impossible since $M, f(i) \sat \phi$ and $M, f(j) \sat
      \lnot\phi$.
    \item[R4.] In this case, $\Gamma$ is extended by adding $\pr i i \top$.
      By condition (4), $f(i) \in \Rf{U}{f(i)}$, and since $\Prop\top =
      U$ we have $\R{\top}{f(i)}{f(i)}$.
    \item[R5.] $\Gamma$ contains $\pr{i}{j}{\phi}$ and $\pf j \psi$ and is
      extended by $\pr{i}{j}{\phi \land \psi}$. Since $f(j) \in
      \Rf{\phi}{f(i)}$ and $f(j) \in \Prop\psi$, by (5) we get $f(j)
      \in \Rf{\phi \land \psi}{f(i)}$, i.e., $\R{\phi \land
        \psi}{f(i)}{f(j)}$.
    \item[R6.] $\Gamma$ contains $\pr i j \phi$; $\pf j \psi$; and
      $\pr i k {\phi \land \psi}$, and is extended by
      $\pr{i}{k}{\phi}$ and $\pf k \psi$.  Since $M, f$ satisfies
      $\Gamma$, we have $f(j) \in \Rf{\phi}{f(i)}$ and $f(j) \in
      \Prop\psi$. So $\Rf{\phi}{f(i)} \cap \Prop\psi \neq
      \emptyset$. By condition (6), $\Rf{\phi \land \psi}{f(i)}
      \subseteq \Rf{\phi}{f(i)} \cap \Prop\psi$. In other words, for
      any $y$ such that $\R {\phi \land \psi} {f(i)} y$ we have both
      $\R{\phi}{f(i)}{y}$ and $y \in \Prop\psi$. Since $M, f$
      satisfies $\pr i k {\phi \land \psi} \in \Gamma$, we have that
      $\R {\phi \land \psi} {f(i)} {f(k)}$, i.e., $f(k)$ is such
      a~$y$. Thus, $\R{\phi}{f(i)}{f(k)}$ and $M, f(k) \sat \psi$.
  \end{enumerate}
\end{proof}

\begin{cor}\label{soundness-sub-VC}
  Tableaux for \Ck{} and \CK{} are sound for Segerberg
  models. Tableaux for \Vc{} are sound for \VC-models
\end{cor}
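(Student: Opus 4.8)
The plan is to read off both claims from the modular structure of the proof of \hyperref[soundness-VC]{Theorem~\ref*{soundness-VC}}, which establishes the soundness of each rule separately: it shows that applying any rule to a branch satisfied by some $M, f$ leaves at least one resulting branch satisfied, so that a tableau begun from satisfiable assumptions always retains a satisfiable (hence open) branch.

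The claim for \Vc{} is immediate. Every \Vc-tableau is in particular a \VC-tableau---one that happens to use neither cut nor ea. Since \hyperref[soundness-VC]{Theorem~\ref*{soundness-VC}} already shows that \VC-tableaux are sound for \VC-models, the same holds a fortiori for \Vc-tableaux.

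The claim for \Ck{} and \CK{} requires slightly more, since here the model class is enlarged from \VC-models to arbitrary Segerberg models. A \CK-tableau, and hence any \Ck-tableau, uses only the rules of \hyperref[rules-CK]{Table~\ref*{rules-CK}} together with cut and ea; it never uses R1--R6 of \hyperref[rules-VC]{Table~\ref*{rules-VC}}. Inspecting the corresponding cases in the proof of \hyperref[soundness-VC]{Theorem~\ref*{soundness-VC}}, each of them invokes only the definition of a Segerberg model and never any of the conditions of \hyperref[axioms-conditions]{Table~\ref*{axioms-conditions}}; in particular, the soundness of ea uses only that $R$ is indexed by propositions, so that $\Prop\phi = \Prop\psi$ forces $R_\phi = R_\psi$. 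Hence, over an arbitrary Segerberg model, applying any \CK-rule to a branch satisfied by $M, f$ again leaves a satisfiable branch, and the same argument shows that a closed \CK-tableau (and thus a closed \Ck-tableau) witnesses unsatisfiability over all Segerberg models.

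The only thing that needs checking is the bookkeeping: that \Ck{} and \CK{} really use none of R1--R6, and that the cut and ea cases of the earlier proof genuinely avoid the \VC-conditions. This amounts to matching each calculus to its rule set and each rule's soundness case to the semantic assumptions it actually uses, so no new argument is required.
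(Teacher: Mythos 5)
Your proposal is correct and follows essentially the same route as the paper: the published proof likewise observes that the \VC-model conditions are invoked only in the soundness cases for R1--R6 (so the remaining cases, including cut and ea, go through for arbitrary Segerberg models), and that \Vc{} is sound for \VC-models simply because it has fewer rules than \VC.
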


\begin{proof}
  The conditions for \VC-models are only used in the verification of
  soundness of rules R1--6. Since \VC{} is sound for \VC-models, and
  \Vc{} has fewer rules than \VC, \Vc{} is also sound for \VC-models.
\end{proof}

\begin{defn}
  We say that $\Gamma$ \emph{\VC-entails} $\phi$, $\Gamma \sat_\VC$,
  iff for every \VC-model~$M$ and world~$x$ such that $M, x \sat \psi$
  for every $\psi \in \Gamma$, $M, x \sat \phi$.
\end{defn}

\begin{cor}
  If $\Gamma \vdash_\VC \phi$ then $\Gamma \sat_\VC \phi$.
\end{cor}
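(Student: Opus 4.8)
The plan is to read this off directly from soundness (\hyperref[soundness-VC]{Theorem~\ref*{soundness-VC}}); there is no new combinatorial content, so the only work is to align the two notions of satisfaction carefully.

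First I would unpack the hypothesis. By the definition of $\vdash_\VC$, the statement $\Gamma \vdash_\VC \phi$ asserts that the set of prefixed formulas $\{\pf 1 \psi \mid \psi \in \Gamma\} \cup \{\pf 1 {\lnot\phi}\}$ has a closed \VC-tableau. In the terminology of the soundness proof, this is exactly a closed tableau whose set of ordinary-formula assumptions is $\Delta = \Gamma \cup \{\lnot\phi\}$.

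Next I would apply \hyperref[soundness-VC]{Theorem~\ref*{soundness-VC}}: because this tableau is closed, its assumption set $\Delta = \Gamma \cup \{\lnot\phi\}$ is not satisfiable in any \VC-model. Here I would lean on the bridge already established inside the soundness proof, where a model $M$ and world $x$ with $M, x \sat \theta$ for every $\theta \in \Delta$ are converted into the map $f$ with $f(1) = x$, so that $M, f$ satisfies the prefixed assumptions; running this equivalence backwards, unsatisfiability of the prefixed assumptions is precisely unsatisfiability of $\Delta$ as a set of formulas.

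Finally I would extract the conclusion by contraposition. That $\Gamma \cup \{\lnot\phi\}$ is unsatisfiable means there is no \VC-model $M$ and world $x$ with $M, x \sat \psi$ for every $\psi \in \Gamma$ and, at the same time, $M, x \sat \lnot\phi$. Equivalently, for every \VC-model $M$ and world $x$, if $M, x \sat \psi$ for all $\psi \in \Gamma$ then $M, x \nsat \lnot\phi$, i.e.\ $M, x \sat \phi$; this is exactly $\Gamma \sat_\VC \phi$. The only point that requires any care — and the nearest thing to an obstacle — is this translation between the prefixed notion of satisfaction via $M, f$ used in \hyperref[soundness-VC]{Theorem~\ref*{soundness-VC}} and the truth-at-a-world notion used in the definition of $\sat_\VC$. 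Since index $1$ is the sole index occurring among the assumptions, the translation reduces to nothing more than setting $f(1) = x$, and no genuine difficulty arises.
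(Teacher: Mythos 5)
Your proof is correct and matches the paper's intent exactly: the paper states this corollary without proof, treating it as an immediate consequence of Theorem~\ref{soundness-VC} (soundness for \VC-models), which is precisely the unpacking-plus-contraposition argument you give. The care you take in translating between satisfaction of prefixed formulas via $M, f$ and truth at a world is the right (and only) point needing attention, and you handle it as the soundness proof itself does by setting $f(1) = x$.
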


\begin{prop}
  Tableaux for \VC{} are complete, i.e., if $\Gamma \sat_\VC \phi$
  then there is a closed \VC-tableau for $\Gamma \vdash_\VC \phi$.
\end{prop}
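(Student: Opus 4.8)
The plan is to mirror exactly the strategy used for Theorem~\ref{complete-CK}, now for the full system. Segerberg axiomatized Lewis's~\VC{} as \CK{} together with the axioms S1--S6 of Table~\ref{axioms-conditions}, and he proved that this axiomatic calculus is \emph{strongly} complete for \VC-models. Hence it suffices to show that $\vdash_\VC$ is closed under every rule and axiom of Segerberg's system: if $\Gamma \sat_\VC \phi$, then there is an axiomatic derivation of $\phi$ from $\Gamma$, and replaying that derivation step by step will produce a closed \VC-tableau witnessing $\Gamma \vdash_\VC \phi$. Since the axiomatic system for \VC{} extends that for \CK{} only by S1--S6, the genuinely new task is to exhibit a closed \VC-tableau for each of S1--S6; everything else is inherited.

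The \CK-level machinery carries over without change. A \VC-tableau may use every rule available to a \Ck-tableau together with cut, ea, and R1--R6, so it is in particular a legitimate \CK-tableau. Consequently Proposition~\ref{axioms-Ck} still supplies closed tableaux for CM, CC, CN; Propositions~\ref{RCEC} and~\ref{RCEA} show that $\vdash_\VC$ is closed under RCEC and RCEA; and Proposition~\ref{MP} (which uses cut) shows closure under modus ponens. The propositional rules remain complete for tautologies. Thus the only components of Segerberg's calculus not yet accounted for are the six new axioms.

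The new content is therefore a proposition, analogous to Proposition~\ref{axioms-Ck}, asserting that each of S1--S6 has a closed \VC-tableau, with each construction driven by the matching rule R1--R6 of Table~\ref{rules-VC}. The simple cases set the pattern. For S1, $\nec\phi\phi$: start from $\pf 1 {\lnot\nec\phi\phi}$; the $\lnot\Box$ rule yields $\pr 1 2 \phi$ and $\pf 2 {\lnot\phi}$; then R1 applied to $\pr 1 2 \phi$ gives $\pf 2 \phi$, closing the branch. For S3, $\phi \lif \nec\top\phi$: start from $\pf 1 {\lnot(\phi \lif \nec\top\phi)}$, decompose to $\pf 1 \phi$ and $\pf 1 {\lnot\nec\top\phi}$, apply $\lnot\Box$ to obtain $\pr 1 2 \top$ and $\pf 2 {\lnot\phi}$; now the premises $\pf 1 \phi$, $\pf 2 {\lnot\phi}$, $\pr 1 2 \top$ of R3 are all present, so R3 delivers $\pf 2 \phi$ and the branch closes. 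Each remaining axiom is handled by decomposing its main connectives, introducing the accessibility facts forced by $\lnot\Box$ and $\Diamond$, and then firing the correspondingly-numbered rule to derive the contradiction, exactly as the rule-to-condition correspondence of the soundness proof (Theorem~\ref{soundness-VC}) would predict.

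The main obstacle will be S5 and S6, whose axioms involve the conjoined antecedent $\nec{\phi\land\psi}{}$ and nested conditionals. Here I expect the construction to require more than a single application of R5 or R6: cut will be needed to case-split on whether intermediate formulas hold at the newly introduced worlds, and ea will likely be needed to move between accessibility relations indexed by $\phi$ and by $\phi\land\psi$ where the two antecedents need not express the same proposition. Verifying that these interact correctly---so that every branch of the resulting tableau genuinely closes---is the delicate part of the argument. Once all six tableaux are in hand, the completeness claim follows: Segerberg's strong completeness gives an axiomatic derivation for any semantic consequence, and the closure properties just assembled turn that derivation into a closed \VC-tableau.
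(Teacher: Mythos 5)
Your proposal follows the paper's proof exactly: invoke Segerberg's strong completeness of \CK{} plus S1--S6 for \VC-models, reduce via Theorem~\ref{complete-CK} to exhibiting closed \VC-tableaux for S1--S6, and construct each one by pairing axiom S$n$ with rule R$n$ (the paper relegates these six tableaux to the Appendix). Your only inaccuracy is the anticipated difficulty of S5 and S6: neither requires cut or ea --- each closes after a single application of R5 (resp.\ R6) followed by the $\Box$ rule, since the rules already supply exactly the accessibility facts $\pr 1 2 {\phi\land\psi}$ (resp.\ $\pr 1 3 \phi$ and $\pf 3 \psi$) needed to derive the contradiction.
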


\begin{proof}
  \cite{Segerberg1989} stated strong completeness of RCEC, RCEA,
  CM, CC, CN plus axioms S1--6 of
  \hyperref[axioms-conditions]{Table~\ref*{axioms-conditions}}.  By
  \hyperref[complete-CK]{Theorem~\ref*{complete-CK}}, it suffices to
  show that axioms S1--6 have closed \VC-tableaux. These can be found
  in the \hyperref[appendix]{Appendix}.
\end{proof}

Finally, a remark about the system \textbf{C2} of
\citet{Stalnaker1970}. Also known as \textbf{VCS}, it is the logic
characterized by Lewis as \VC{} plus conditional excluded middle,
$(\phi \cif \psi) \lor (\phi \cif \lnot\psi)$, or, in Chellas's
notation: $\nec\phi\psi \lor \nec\phi{\lnot\psi}$. Segerberg frames
for it are characterized by the condition that whenever $\R S x y$
and $\R S x z$, then $y = z$. A tableau rule for it would be
\[
\AxiomC{\pr i j \phi }
\noLine
\UnaryInfC{\pr i k \phi}
\noLine
\UnaryInfC{\pf j \psi}
\RightLabel{cem}
\UnaryInfC{\pf k \psi}
\DisplayProof
\]
The rule is clearly sound. Its addition results in a system complete
for \textbf{VCS}, as it can prove conditional excluded middle:
\begin{center}
  \begin{tableau}{to prove={\vdash \nec\phi\psi \lor \nec\phi{\lnot\psi}}}
      [\pf 1 {\lnot(\nec\phi\psi \lor \nec\phi{\lnot\psi})}, just=Ass
        [\pf 1 {\lnot\nec\phi\psi}, just=$\lnot\lor$:!u
          [\pf 1 {\lnot\nec\phi{\lnot\psi}}, just=$\lnot\lor$:!uu
            [\pf 2 \psi, just=$\lnot\Box$:!uu
              [\pr 1 2 \phi, just=$\lnot\Box$:!uuu
                [\pf 3 {\lnot\psi}, just=$\lnot\Box$:!uuu
                  [\pr 1 3 \phi, just=$\lnot\Box$:!uuuu
                    [\pf 3 \psi, just=cem:{!uuu,!u,!uuuu},close={:!uu,!c}
                    ]
                  ]
                ]
              ]
            ]
          ]
        ]
      ]
  \end{tableau}
\end{center}

\section{Challenges for Proofs of Cut-Free Completeness}

We have shown that \CK{} and \VC{} are sound and complete with respect
to Segerberg models and \VC-models, respectively. The completeness
theorem relies on the presence of the cut rule. What are the prospects
of proving completeness without the cut rule? First, let us review the
proof of cut-free completeness for~\Ck{} due to \citet{Priest2008}.

\begin{defn}
  A \emph{Priest model} $M = \langle U, R, V\rangle$ consists of a set of
  worlds~$U \neq \emptyset$, an accessibility relation $R\colon
  \mathrm{Frm} \to \wp(U \times U)$ indexed by \emph{formulas}, and a
  variable assignment $V\colon \mathit{Var} \to \wp{U}$.

  We write $R_\phi$ for $R(\phi)$ and $\Rf \phi x$ for $\{ y \mid \R \phi x y\}$.
\end{defn}

\begin{defn}
  Truth of a formula $\phi$ at a world~$x$ in $M$, $M, x \sat \phi$, 
  is defined exactly as for Segerberg models.
\end{defn}

\begin{thm}[\citealt{Priest2008}, \S5.9]
  \Ck-tableaux are sound for Priest models.
\end{thm}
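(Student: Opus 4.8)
The plan is to reproduce, for Priest models, the satisfiability-preservation argument already carried out in the proof of Theorem~\ref{soundness-VC}, but restricted to the rules of Table~\ref{rules-CK}. As there, I would first extend satisfaction to prefixed formulas: given a Priest model $M$ and a map $f\colon I \to U$ from the set $I$ of indices occurring in a set $\Gamma$ of prefixed formulas, say that $M, f$ satisfies $\Gamma$ if $M, f(i) \sat \phi$ whenever $\pf i \phi \in \Gamma$ and $\R\phi{f(i)}{f(j)}$ whenever $\pr i j \phi \in \Gamma$ (where now $R_\phi$ is the formula-indexed relation $R(\phi)$). The statement to prove then reduces to: if the root set of a \Ck-tableau is satisfied by some $M, f$, then any application of a branch-extension rule leaves at least one resulting branch satisfiable; hence no branch can ever close, and a set with a closed tableau cannot be satisfiable.

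The crucial observation is that the truth conditions for $\nec\phi\psi$ and $\poss\phi\psi$ in a Priest model have exactly the same shape as in a Segerberg model — both quantify over the $y$ with $\R\phi x y$ — and the only difference, that $R_\phi$ is indexed by the formula $\phi$ rather than by $\Prop\phi$, never enters the verification of the Table~\ref{rules-CK} rules. I would check the modal rules explicitly. For $\Box$: if a branch $\Gamma$ satisfied by $M, f$ contains $\pf i {\nec\phi\psi}$ and $\pr i j \phi$, then $M, f(i) \sat \nec\phi\psi$ and $\R\phi{f(i)}{f(j)}$, so $M, f(j) \sat \psi$, i.e.\ $M, f$ satisfies $\Gamma \cup \{\pf j \psi\}$. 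For $\lnot\Box$: from $\pf i {\lnot\nec\phi\psi}$ there is some $y$ with $\R\phi{f(i)}{y}$ and $M, y \nsat \psi$; since the new index $j$ does not occur in $\Gamma$, extend $f$ to $f'$ with $f'(j) = y$, and then $M, f'$ satisfies $\Gamma \cup \{\pr i j \phi, \pf j {\lnot\psi}\}$. The $\Diamond$ and $\lnot\Diamond$ cases are symmetric, and the propositional rules are routine, each either preserving the single satisfied branch or, for branching rules, retaining the disjunct that $M, f(i)$ already makes true.

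What I would stress — and what makes the restriction to \Ck-tableaux essential — is precisely which rules are \emph{absent}. The ea rule is not sound for Priest models: its soundness in Theorem~\ref{soundness-VC} turned on the implication $\Prop\phi = \Prop\psi \Rightarrow R_\phi = R_\psi$, which holds in Segerberg models but fails by design in Priest models, where $R_\phi$ and $R_\psi$ may differ even when $\phi$ and $\psi$ are true at exactly the same worlds; the cut rule is likewise excluded. So the only content of the theorem is that none of the Table~\ref{rules-CK} rules exploits proposition-indexing or the closure conditions on $P$ — indeed a Priest model carries no set $P$ at all, and $V$ may assign arbitrary subsets of $U$. I therefore do not expect any genuine obstacle: the argument is a direct transcription of the Segerberg-model cases, reread with ``$\R\phi{\,\cdot\,}{\,\cdot\,}$'' denoting membership in the formula-indexed relation.
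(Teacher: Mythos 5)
Your proposal is correct and follows exactly the paper's route: the paper's entire proof of this theorem is the single line ``As in Theorem~\ref{soundness-VC},'' i.e.\ the same satisfiability-preservation argument restricted to the rules of Table~\ref{rules-CK}, which is precisely what you spell out. Your additional observation that the ea rule would fail for Priest models (since $\Prop\phi=\Prop\psi$ no longer forces $R_\phi=R_\psi$) is accurate and anticipates the paper's later discussion, though it is not needed for the theorem itself.
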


\begin{proof}
  As in \hyperref[soundness-VC]{Theorem~\ref*{soundness-VC}}.
\end{proof}

\begin{thm}[\citealt{Priest2008}, \S5.9]
  \Ck-tableaux are complete for Priest models, i.e., if $\Delta$ has no
  closed \Ck-tableau, then it has a Priest model.
\end{thm}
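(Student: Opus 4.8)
The plan is to argue directly from the failure to close: from a systematic tableau for $\Delta$ I would extract a single open, saturated branch and read a Priest model off it. First I would describe a \emph{fair} tableau construction. Process the prefixed formulas appearing on each branch in a fixed order, applying the appropriate \Ck-rule of \hyperref[rules-CK]{Table~\ref*{rules-CK}} to each, and dovetail the applications so that in the limit (i)~every prefixed formula occurring on a branch eventually has its rule applied, and (ii)~every universal premise $\pf i {\nec\phi\psi}$ (and dually $\pf i {\lnot\poss\phi\psi}$) is combined with every accessibility formula $\pr i j \phi$ that ever appears. Since $\Delta$ has no closed \Ck-tableau, this construction yields a tableau with at least one branch $B$ that never closes; by fairness $B$ is \emph{saturated}, i.e.\ closed under every rule. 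The branch $B$ may be infinite and its set of indices countably infinite, which causes no difficulty.

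Next I would read off a model $M = \langle U, R, V\rangle$ from $B$: let $U$ be the set of indices occurring on $B$, put $V(p) = \{\, i \mid \pf i p \in B \,\}$, and for each formula $\phi$ set $R(\phi) = \{\, (i,j) \mid \pr i j \phi \in B \,\}$, so that $\R\phi i j$ holds iff $\pr i j \phi \in B$. The crucial point---and the reason \Ck{} (unlike \CK{} and \VC) admits a cut-free completeness proof---is that in a Priest model $R$ is indexed by \emph{formulas}, so this reading is automatically legitimate: there is no closure condition on $R$ or $V$ to discharge, and in particular no demand that $R(\phi) = R(\psi)$ whenever $\Prop\phi = \Prop\psi$. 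That coherence demand is exactly what, for Segerberg models, forces the ea rule (hence cut) into the completeness argument.

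The heart of the proof is then a truth lemma, by induction on the complexity of $\phi$: if $\pf i \phi \in B$ then $M, i \sat \phi$, and if $\pf i {\lnot\phi} \in B$ then $M, i \nsat \phi$. The atomic and $\bot$ cases use the definition of $V$ together with openness of $B$ (it contains neither $\pf i \bot$ nor a complementary pair $\pf i p$, $\pf i {\lnot p}$); the propositional cases use saturation under the rules of \hyperref[rules-CK]{Table~\ref*{rules-CK}} and the induction hypothesis. For $\pf i {\nec\phi\psi} \in B$: any $j$ with $\R\phi i j$ has $\pr i j \phi \in B$, so saturation under the $\Box$ rule gives $\pf j \psi \in B$, whence $M, j \sat \psi$; thus $M, i \sat \nec\phi\psi$. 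For $\pf i {\lnot\nec\phi\psi} \in B$: saturation under $\lnot\Box$ supplies a witness $j$ with $\pr i j \phi, \pf j {\lnot\psi} \in B$, giving $\R\phi i j$ with $M, j \nsat \psi$, so $M, i \nsat \nec\phi\psi$. The $\poss$ and $\lnot\poss$ cases are symmetric, using the $\Diamond$ and $\lnot\Diamond$ rules. Finally, since the members of $\Delta$ occur on $B$ with prefix $1$, the truth lemma yields $M, 1 \sat \psi$ for every $\psi \in \Delta$, so $M$ is a Priest model of $\Delta$.

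I expect the only genuine obstacle to be the bookkeeping of the systematic construction: one must schedule rule applications so that the limit branch is saturated even though the $\lnot\Box$ and $\Diamond$ rules keep introducing fresh indices, and must confirm that an infinite open branch still determines a well-defined model. Once saturation is secured the truth lemma is routine, precisely because formula-indexed relations leave no further constraints to verify.
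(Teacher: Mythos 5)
Your proposal is correct and follows essentially the same route as the paper: extract a complete (saturated) open branch, read off a Priest model with $U$ the indices, $V(p)$ and $R(\phi)$ defined by membership of $\pf i p$ and $\pr i j \phi$ on the branch, and prove the truth lemma by induction on formula complexity. Your explicit dovetailing/fairness argument for obtaining a saturated branch is in fact more careful than the paper's one-sentence justification, but it is a refinement of the same construction rather than a different approach.
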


\begin{proof}
  Call a branch \emph{complete} if for every rule that can be applied
  on the branch has been applied. Since any finite branch contains
  only finitely many prefixed formulas, it can be extended by applying
  all finitely many rules that are applicable. So if $\Delta$ has no
  closed tableau, there is a tableau with $\Delta$
  as assumptions which contains at least one complete open
  branch~$\Gamma$.

  Let $U$ be the set of indices occurring on~$\Gamma$. Let
  \[
  V(p) = \{ i \mid \pf i p \in \Gamma \}.
  \]
  Set $\R \phi i j$ iff $\pr{i}{j}{\phi} \in \Gamma$.

  We show that if $\pf{i}{\theta} \in \Gamma$ then $M, i \sat \theta$ and if
  $\pf i {\lnot\theta} \in \Gamma$ then $M, i \nsat \theta$ by induction
  on~$\theta$.

  \begin{enumerate}
  \item $\theta$ is atomic: If $\pf i p \in \Gamma$, $M, i \sat p$ by
    definition of~$V$. If $\pf i {\lnot p} \in \Gamma$, then since
    $\Gamma$ is open, $\pf i p \notin \Gamma$. Thus $i \notin V(p)$, and
    $M, i \nsat p$.
  \item $\theta \equiv \lnot \phi$: Suppose $\pf i {\lnot \phi} \in
    \Gamma$. If $\phi$ is of the form $\lnot \psi$, since $\Gamma$ is
    complete, $\pf i \psi \in \Gamma$. By induction hypothesis, $M, i
    \sat \psi$ and so $M, i \sat \lnot\lnot\psi$, i.e., $M, i \sat
    \lnot\phi$. If $\phi$ is not of the form $\lnot\psi$, the case
    $\pf i {\lnot\phi} \in \Gamma$ will be treated in one of the cases
    below. If $\pf i {\lnot\lnot\phi} \in \Gamma$, then again $\pf i
    \phi \in \Gamma$ and we have $M, i \nsat \lnot \phi$.
  \item $\theta \equiv \phi \land \psi$: Suppose $\pf i {\phi \land \psi}
    \in \Gamma$. Since $\Gamma$ is closed, $\pf{i}{\phi} \in \Gamma$ and
    $\pf i \psi \in \Gamma$. By induction hypothesis, $M, i \sat \phi$
    and $M, i \sat \psi$, so $M, i \sat \phi \land \psi$. If $\pf i
    {\lnot(\phi \land \psi)} \in \Gamma$, then since $\Gamma$ is
    closed, either $\pf i {\lnot\phi} \in \Gamma$ or $\pf i {\lnot\psi} \in
    \Gamma$. Thus, either $M, i \nsat \phi$ or $M, i \nsat \psi$ by
    IH, and $M, i \nsat \phi \land \psi$.
  \item The cases for $\theta \equiv \phi \lor \psi$ and $\theta
    \equiv \phi \lif \psi$ are handled similarly.
  \item $\theta \equiv \nec\phi\psi$: Suppose $\pf i {\nec\phi\psi}
    \in \Gamma$. For every $j$ such that $\R{\phi}{i}{j}$,
    $\pr{i}{j}{\phi} \in \Gamma$ by definition of $R_\phi$. So for
    every $j$ such that $\R{\phi}{i}{j}$, $j$ occurs
    in~$\Gamma$. Since $\Gamma$ is complete, the $\Box$ rule must have
    been applied on~$\Gamma$ with index~$j$, i.e., $\pf j \psi \in
    \Gamma$.  By induction hypothesis, $M, j \sat \psi$. Hence, $M, j
    \sat \nec\phi\psi$.

    Now suppose $\pf i {\lnot\nec\phi\psi} \in \Gamma$. Since $\Gamma$ is
    complete, $\pf j {\lnot\psi} \in \Gamma$ and $\R{\phi}{i}{j} \in \Gamma$
    for some~$j$. By IH, $M, j \nsat \psi$. By definition of $R_\phi$,
    $\R{\phi}{i}{j}$. So, $M, i \nsat \nec\phi\psi$.
  \end{enumerate}
\end{proof}

Extending Priest's proof of cut-free completeness to \Vc{} would
involve a definition of a class of Priest models for which
\Vc-tableaux are sound and cut-free complete. The obvious approach
would be to reformulate Segerberg's conditions for Priest
models. Let's call a Priest model that satisfies the conditions of
\hyperref[Vc-conditions]{Table~\ref*{Vc-conditions}} a
\emph{\Vc-model}.

\begin{table}
  \begin{center}
  \begin{tabular}{ll}
    \hline\hline
    (1) & $\Rf \phi x \subseteq \Prop\phi$\\
    (2) & $\Rf \phi x \cap \Prop\psi \neq \emptyset \Rightarrow \Rf \psi x \neq \emptyset$\\
    (3) &
    $\Rf \top x \subseteq \{x\}$ \\
    (4) & $x \in \Rf \phi x$\\
    (5) &
    $\Rf \phi x \cap \Prop\psi \subseteq \Rf {\Prop{\phi \land \psi}}{x}$\\
    (6) & 
    $\Rf \phi x \cap \Prop\psi \neq \emptyset \Rightarrow 
    \Rf{\phi \land \psi}{x} \subseteq \Rf \phi x \cap \Prop\psi$
    \\\hline
\end{tabular}\end{center}
    \caption{Conditions on \Vc-models}
    \label{Vc-conditions}
\end{table}

We would now have to show that the model constructed from a tableau
branch which is also closed under rules R1--6 satisfies the
corresponding property in
\hyperref[Vc-conditions]{Table~\ref*{Vc-conditions}}.

Condition R1 poses no problem: We have to show that $\Rf \phi i
\subseteq \Prop\phi$ for all~$\phi$. Suppose $\R \phi i j$. By
definition of~$R$, $\pr i j \phi \in \Gamma$. Since $\Gamma$ is
complete, rule R1 has been applied on it, so $\pf j \phi \in
\Gamma$. By the result above, $M, j \sat \phi$, i.e., $j \in
\Prop\phi$.  

The same approach does not work for R2 ($\Rf \phi i \cap \Prop\psi
\neq \emptyset \Rightarrow \Rf \psi i \neq \emptyset$). For suppose $j
\in \Rf \phi i \cap \Prop\psi$, i.e., $\R \phi i j$ and $M, j \sat
\psi$.  By definition of $R$, $\pr i j \phi \in \Gamma$. However, $M,
j \sat \psi$ does not guarantee that $\pf j \psi \in \Gamma$. In fact,
if $\psi$ is not a subformula of $\Delta$, $\pf j \psi$ is guaranteed
\emph{not} to be in~$\Gamma$. 

For (3), suppose $\R \top i j$. Then $\pr i j \top \in
\Gamma$. Completeness under R3 only rules out $i \neq j$ if $\pf i
\psi \in \Gamma$ for some~$\psi$, which is not guaranteed. Etc.

To extend the proof of cut-free completeness to \VC{} with respect to
\VC-models, we face an even more difficult obstacle. For \VC{} models
have their accessibility relation indexed by propositions, not
formulas. So the definition of a Segerberg model~$M$ from an open
complete branch~$\Gamma$ would have to define $R_S$ on the basis of
which $\pr i j \phi$ are in~$\Gamma$. We could do this only if we
already knew which propositions $\phi$ expresses in~$M$---then we could
say that if $S = \Prop\phi$, $\R S i j$ iff $\pr i j \phi \in
\Gamma$. But of course we can't do this, since $M$---which determines
$\Prop\phi$---is not yet defined! An additional obstacle is that rule ea
does not guarantee that $\pr i j \phi$ iff $\pr i j \psi$ even when,
for all $k$, $\pf k \phi \in \Gamma$ iff $\pf k \psi \in \Gamma$.

\section{Conclusion}

The above considerations show that using the approach pioneered by
Chellas and Segerberg, and recently extended significantly by
Unterhuber and Schurz, point to a way of constructing analytic rules
for conditional logics (if not analytic tableaux calculi).  The rules
given are relatively straightforward translations of conditions on the
indexed accessibility relation into tableaux rules. A condition
expressible by a universal formula, such as S1 ($\forall i\forall j(\R
\phi i j \lif j \in \Prop \phi)$), translates into 
extension rules without conditions. Those involving existential quantifiers, such as S2, i.e.,
\[
\forall i\forall j((\R \phi i j \land j \in \Prop\psi) \lif \exists
k\, \R \psi i k),
\]
involve the introduction of prefixes new to the
branch.  It is plausible that a fragment of first-order logic can be
identified such that any condition expressible in that fragment can be
translated into a sound tableau rule. Identity (and uniqueness), e.g.,
the condition for \textbf{C2} ($\forall i\forall j\forall k((\R \phi i
j \land \R \phi i k) \lif j = k)$) is a bit trickier: here the
resulting rule does not force $j = k$ but only that all formulas
evaluate the same at $j$ and~$k$, i.e., $j$ and $k$ are indiscernible.
So, other systems can be dealt with in a similar manner, as long as
they are characterized by accessibility relations expressible in the
right way. The lack of a proof of cut-free completeness is of course
unsatisfying. Having candidate rules as well as a tentative analysis
of why available methods of establishing cut-free completeness fail
perhaps points in a direction of solving this open problem. If nothing
else, we have highlighted that Chellas's and Segerberg's approach to a
semantics for conditionals has not yet been sufficiently exploited in
the search for analytic proof systems for a class of logics
that includes systems as important as Lewis's~\VC.

\bigskip\noindent\textbf{Acknowledgements.}
  I'd like to thank the reviewer for the \emph{AJL} for their comments.

\bibliography{cstab}

\newpage
\section*{Appendix}
\label{appendix}

\subsection*{Tableaux for \Ck-axioms}

\begin{enumerate}
\item\label{CM} \Ck-tableau for CM:
  
  \begin{tableau}{to prove=
      {\nec\phi{(\psi \land \theta)} \vdash_\Ck
        \nec\phi\psi \land \nec\phi\theta}}
    [\pf 1 {\nec\phi{(\psi \land \theta)}}, just = Ass
      [\pf 1 {\lnot(\nec\phi\psi \land \nec\phi\theta)}, just = Ass
        [\pf 1 {\lnot\nec\phi\psi}, just = $\lnot\land$:!u
          [\pf 2 {\lnot\psi}, just = $\lnot\Box$:!u
            [\pr 1 2 \phi, just = $\lnot\Box$:!uu
              [\pf 2 {\psi \land \theta}, just = $\Box$:{!uuuuu,!u}
                [\pf 2 \psi, just = $\land$:!u
                  [\pf 2 \theta, just = $\land$:!uu, close={:!u,!uuuu}]
                ]
              ]
            ]
          ]
        ]
        [\pf 1 {\lnot\nec\phi\theta}, just = $\lnot\land$:!u
          [\pf 2 {\lnot\theta}, just = $\lnot\Box$:!u
            [\pr 1 2 \phi, just = $\lnot\Box$:!uu
              [\pf 2 {\psi \land \theta}, just = $\Box$:{!uuuuu,!u}
                [\pf 2 \psi, just = $\land$:!u
                  [\pf 2 \theta, just = $\land$:!uu, close={:!c,!uuuu}]
                ]
              ]
            ]
          ]
        ]
      ]
    ]
  \end{tableau}
\item \Ck-tableau for CC:
  
  \begin{tableau}{to prove=
      {\nec\phi\psi \land \nec\phi\theta \vdash_\Ck
        \nec\phi{(\psi \land \theta)}}}
    [\pf 1 {\nec\phi\psi \land \nec\phi\theta}, just = Ass
      [\pf 1 {\lnot\nec\phi{(\psi \land \theta)}}, just = Ass
        [\pf 1 {\nec\phi\psi}, just = $\land$:!uu
          [\pf 1 {\nec\phi\theta}, just = $\land$:!uuu
            [\pr 1 2 \phi, just = $\lnot\Box$:!uuu
              [\pf 2 {\lnot(\psi \land \theta)}, just = $\lnot\Box$:!uuuu
                [\pf 2 {\lnot\psi}, just = $\lnot\land$:!u
                  [\pf 2 \psi, just = $\Box$:{!uuuuu,!uuu}, close={:!u,!c}]
                ]
                [\pf 2 {\lnot\theta}, just = $\lnot\land$:!u
                  [\pf 2 \theta, just = $\Box$:{!uuuu,!uuu}, close={:!u,!c}]
                ]
              ]
            ]
          ]
        ]
      ]
    ]
  \end{tableau}\hfill
  \item\label{CN} \Ck-tableau for CN:
  
  \begin{tableau}{to prove={\vdash \nec\phi\top}}
    [\pf 1 {\lnot\nec\phi\top}, just = Ass
      [\pr 1 2 \phi, just = $\lnot\Box$:!u
        [\pf 2 \lnot\top, just = $\lnot\Box$:!uu, close={:!c}]
      ]
    ]
  \end{tableau}
\end{enumerate}

\subsection*{\Vc-tableaux for axioms S1--6}

\begin{enumerate}
\item
  \begin{tableau}{to prove={\vdash \nec\phi\phi}}
    [\pf 1 {\lnot\nec\phi\phi}, just = Ass
      [\pr 1 2 \phi, just = $\lnot\Box$:!u
        [\pf 2 {\lnot\phi}, just = $\lnot\Box$:!uu
          [\pf 2 \phi, just = R1:!uu, close={:!u,!c}
          ]
        ]
      ]
    ]
  \end{tableau}
\item 
  \begin{tableau}{to prove={\poss\phi\psi \vdash \poss\psi\top}}
    [\pf 1 {\poss\phi\psi}, just = Ass
      [\pf 1 {\lnot\poss\psi\top}, just = Ass
        [\pr 1 2 \phi, just = $\Diamond$:!uu
          [\pf 2 \psi, just = $\Diamond$:!uuu
            [\pr 1 2 \psi, just = R2:{!uu,!u}
              [\pf 2 \lnot\top, just = $\lnot\Diamond$:{!uuuu,!u}, close={:!c}]
            ]
          ]
        ]
      ]
    ]
  \end{tableau}
\item 
  \begin{tableau}{to prove={\phi \vdash \nec\top\phi}}
    [\pf 1 \phi, just = Ass
      [\pf 1 {\lnot\nec\top\phi}, just = Ass
        [\pr 1 2 \top, just=$\lnot\Box$:!u,
          [\pf 2 \lnot\phi, just = $\lnot\Box$:!uu,
            [\pf 2 \phi, just = R3:{!uuuu,!uu,!u}, close={:!u,!c}
            ]
          ]
        ]
      ]
    ]
  \end{tableau}
\item 
  \begin{tableau}{to prove={\phi \vdash \poss\top\phi}}
    [\pf 1 \phi, just = Ass
      [\pf 1 {\lnot\poss\top\phi}, just = Ass
        [\pr 1 1 \top, just=R4
          [\pf 1 {\lnot\phi}, just=$\lnot\Diamond$:{!uu,!u}, close={:!uuu,!c}
          ]
        ]
      ]
    ]
  \end{tableau}
\item 
  \begin{tableau}{to prove=
      {\nec{(\phi \land \psi)}\theta \vdash \nec\phi{(\psi \lif \theta)}}}
    [\pf 1 {\nec{(\phi \land \psi)}\theta}, just = Ass
      [\pf 1 {\lnot\nec\phi{(\psi \lif \theta)}}, just = Ass
        [\pr 1 2 \phi, just = $\lnot\Box$:!u
          [\pf 2 {\lnot(\psi \lif \theta)}, just = $\lnot\Box$:!uu
            [\pf 2 \psi, just=$\lnot\lif$:!u
              [\pf 2 \lnot\theta, just = $\lnot\lif$:!uu
                [\pr 1 2 {\phi \land \psi}, just = R5:{!uuuu,!uu}
                  [\pf 2 \theta, just = $\Box$:{!uuuuuuu,!u}, close={:!uu,!c}
                  ]
                ]
              ]
            ]
          ]
        ]
      ]
    ]
  \end{tableau}
\item 
  \begin{tableau}{to prove=
      {\poss\phi\psi, \nec\phi{(\psi \lif \theta)} \vdash \nec{\phi
          \land \psi}\theta}}
    [\pf 1 {\poss\phi\psi}, just = Ass
      [\pf 1 {\nec\phi{(\psi \lif \theta)}}, just = Ass
        [\pf 1 {\nec{\phi \land \psi}\theta}, just = Ass
          [\pr 1 2 \phi, just = $\Diamond$:!uuu
            [\pf 2 \psi, just = $\Diamond$:!uuuu
              [\pr 1 3 {(\phi \land \psi)}, just = $\lnot\Box$:!uuu
                [\pf 3 \lnot\theta, just = $\lnot\Box$:!uuuu
                  [\pr 1 3 \phi, just=R6:{!uuuu,!uuu,!uu}
                    [\pf 3 \psi, just=R6:{!uuuuu,!uuuu,!uuu}
                      [\pf 3 {\psi \lif \theta}, just = $\Box$:{!uuuuuuuu,!uu}
                        [\pf 3 {\lnot\psi}, just = $\lif$:!u,close={:!uu,!c}]
                        [\pf 3 {\theta}, just = $\lif$:!u,close={:!uuuu,!c}]
                      ]
                    ]
                  ]
                ]
              ]
            ]
          ]
        ]
      ]
    ]
  \end{tableau}
  
\end{enumerate}

\end{document}